\documentclass[final,3p,times]{elsarticle}

\usepackage{lineno,hyperref}
\modulolinenumbers[5]
\usepackage{amssymb}
\usepackage{amsmath}
\usepackage{amsthm}
\usepackage{comment}

\journal{Journal of \LaTeX\ Templates}


\theoremstyle{plain}
 \newtheorem{thm}{Theorem}[section]
 
 \newtheorem{lem}{Lemma}[section]
 
\theoremstyle{definition}

\theoremstyle{remark}
 
  \newtheorem*{ack}{Acknowledgement}
 \numberwithin{equation}{section}
 
\makeatletter
\def\ps@pprintTitle{%
  \let\@oddhead\@empty
  \let\@evenhead\@empty
  \let\@oddfoot\@empty
  \let\@evenfoot\@oddfoot
}
\makeatother








\bibliographystyle{elsarticle-num}

\begin{document}

\begin{frontmatter}

\title{Global existence of solutions to Keller-Segel chemotaxis system with heterogeneous logistic source and nonlinear secretion}

\author[GA]{Gurusamy Arumugam}
\address[GA]{Department of Mathematics, National Institute of Technology Calicut,  Kerala, India.}
\ead{guru.poy@gmail.com}

\author[AD]{Asha K. Dond}
\address[AD]{School of Mathematics, Indian Institute of Science Education and Research, Thiruvananthapuram, Kerala, India.}
\ead{ashadond@iisertvm.ac.in}

\author[AE]{Andr\'e H. Erhardt}
\address[AE]{Weierstrass Institute for Applied Analysis and Stochastics, Mohrenstraße 39, 10117 Berlin, Germany}
\ead{andre.erhardt@wias-berlin.de}

\begin{abstract}
We study the following Keller-Segel chemotaxis system with logistic source and nonlinear secretion:
  \begin{align*}
  	u_t&=\Delta u- \nabla\cdot(u\nabla v)+\kappa(|x|)u-\mu(|x|)u^p\quad\text{and}\quad
  	0=\Delta  v-v+u^\gamma,
\end{align*}
where 
$\kappa(\cdot),~\mu(\cdot):[0,R]\rightarrow [0,\infty)$, $\gamma\in (1,\infty)$, $p\in(\gamma+1,\infty)$ and   $\Omega \subset \mathbb{R}^n, n\geq 2$. For this system,  we prove the global existence of  solutions under suitable assumptions on the initial condition and the functions $\kappa(\cdot)$ and $\mu(\cdot).$ 
\end{abstract}

\begin{keyword}
Chemotaxis\sep Parabolic systems\sep Global-in-time existence
\MSC[2010] 35D30\sep 35A01\sep 35K40
\end{keyword}

\end{frontmatter}

\vspace{-0.28cm}
\section{Introduction} 
In this paper, we investigate the following Keller-Segel chemotaxis system, which depends on a logistic source term~\cite{logisticsources,logisticsources1,logisticsources2,logisticsources3} and nonlinear secretion~\cite{secretion2,secretion3,secretion4}, and reads as follows:
\begin{align}
\begin{cases}
 u_t&=\Delta u- \nabla\cdot(u\nabla v)+g(x,u)
 \\
0&=\Delta  v-v+u^\gamma
\end{cases}\quad
\mbox{in}\ \ \Omega\times (0,T),\label{1}
\end{align}
with initial values $u(\cdot,0)=u_0(x)$ for $x\in\Omega$ and homogeneous Neumann boundary conditions $\frac{\partial u}{\partial \nu}=\frac{\partial v}{\partial \nu}=0$ on $\partial\Omega\times (0,T)$, where $\Omega\subset\mathbb{R}^n, n\geq 2$, $u$ denotes the cell density and $v$ denotes the concentration of chemical signal. In addition, we consider the environment depending logistic source $g(x,u)=\kappa(|x|)u-\mu(|x|)u^p$. The function $\kappa(|x|)$ represents 
the self-growth, while $\mu(|x|)$ the self-limitation of the mobile species. This type of system is relevant in the modelling of micro- and macroscopic population dynamics  or tumour invasion processes, see e.g.~\cite{logisticsources1}.

Historically, mathematical modelling of chemotaxis phenomenon dates to the pioneering works of Patlak in the 1950s \cite{pat1953} and Keller-Segel in the 1970s \cite{ks1971,kss1971}. The study of Keller-Segel (type) system is motivated by numerous applications, see for instance 
\cite{ph2001,MR4188348}.  The general form of the Keller-Segel chemotaxis system is given by  
\begin{align}
\hfill u_t&=\nabla\cdot(\phi(u,v)\nabla u- \psi(u,v)\nabla v) + f(u,v)\quad\text{and}\quad
\tau v_t=d\Delta v+ g(u,v)u -h(u,v) v,
\label{KS_general}
\end{align}
where $u$ represents the cell (or organism) density on a given domain $\Omega\subset\mathbb{R}^n$ and $v$ denotes the concentration of the chemical signal.  The cell dynamics derive from  population kinetics and movement, the latter comprising a diffusive flux modelling undirected (random) cell migration and an advective flux with velocity dependency on the gradient of the signal. The motility function $\phi(u,v)$ describes the diffusivity of the cells and $\psi(u,v)$  represents the chemotactic sensitivity. 
The function $f(u,v)$ describes cell growth and death, while the functions $g(u,v)$ and $h(u,v)$  are kinetic functions that describe production and degradation of the chemical signal, respectively. Organisms or cell moves from a lower concentration to its higher concentration of the chemoattractant, which is known as positive chemotaxis. 

Notice that, the general form of chemotactic term is of the form $\psi(u,v,|\nabla v|)$~\cite{nt2018}. The important properties of system \eqref{KS_general} are self aggregation phenomenon and spatial pattern formation. Due to the important applications of chemotaxis in medical and biological sciences,  the  research on chemotaxis phenomenon has become an increasing interest in applied mathematics.

\paragraph*{Main results} The principal purpose of this work is to provide the global existence of solutions to the parabolic-elliptic Keller-Segel system \eqref{1}. The main result reads as follows:
\begin{thm}\label{main}
Let $\Omega$ be a bounded and smooth domain in $\mathbb{R}^n,n\geq 2.$ Let $\mu_1>0,~\gamma>1,~p>\gamma+1$, $q>\frac{n\gamma}{2}$ and $\kappa,~\mu \in C^0([0,R])\cap C^1((0,R))$ and let $g(x,u)=\kappa(|x|)u-\mu(|x|)u^p$. 
In addition, we assume that $0<\alpha<2\frac{p-1-\gamma}{q+\gamma}$. If 
\begin{align}
\mu(s)\geq \mu_1 s^\alpha, \ \ \mbox{for all}\ \ s\in [0,R]\label{r}
\end{align}
then there exists  a global-in-time classical solution to system \eqref{1} for any nonnegative initial datum $u_0\in C^0(\overline{\Omega}).$ 
\end{thm}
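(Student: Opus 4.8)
The plan is the standard three-step programme for logistic Keller-Segel systems: (i) local existence together with a blow-up criterion, (ii) an a priori estimate that propagates an $L^q$-bound, with the heterogeneous damping $-\mu(|x|)u^p$ absorbing the cross-diffusion contribution, and (iii) a bootstrap from $L^q$ to $L^\infty$. For (i), given $u$ I would solve the elliptic equation as $v=(\mathrm{Id}-\Delta)^{-1}u^\gamma$ under Neumann conditions and insert it into the scalar parabolic equation for $u$; a routine fixed-point/semigroup argument then produces a maximal time $T_{\max}\in(0,\infty]$ and a classical solution with $u\ge0$ (parabolic comparison, the reaction vanishing at $u=0$) and $v\ge0$ (elliptic maximum principle, since $u^\gamma\ge0$), such that either $T_{\max}=\infty$ or $\limsup_{t\uparrow T_{\max}}\|u(\cdot,t)\|_{L^\infty(\Omega)}=\infty$; so it suffices to bound $\|u(\cdot,t)\|_{L^\infty(\Omega)}$ on every finite time interval. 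As a preliminary, integrating the first equation and using $\kappa\le\|\kappa\|_{L^\infty}$ together with the weighted Hölder estimate $\int_\Omega u\le(\int_\Omega|x|^\alpha u^p)^{1/p}(\int_\Omega|x|^{-\alpha/(p-1)})^{(p-1)/p}$ --- the last factor being finite since the hypotheses force $\alpha<n(p-1)$ --- gives $\frac{d}{dt}\int_\Omega u\le\|\kappa\|_{L^\infty}\int_\Omega u-c(\int_\Omega u)^p$ with some $c>0$, whence $\int_\Omega u(\cdot,t)\le m_1$ for all $t<T_{\max}$ by ODE comparison, since $p>1$.

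The heart of the argument is step (ii). Testing the first equation with $u^{q-1}$ for the $q$ of the statement, integrating by parts, and using $0=\Delta v-v+u^\gamma$ in the cross-diffusion term, I would discard the favourable terms $-(q-1)\int_\Omega u^{q-2}|\nabla u|^2\le0$ and $-\tfrac{q-1}{q}\int_\Omega u^q v\le0$ to arrive at
\[
\tfrac1q\tfrac{d}{dt}\int_\Omega u^q+\mu_1\int_\Omega|x|^\alpha u^{q+p-1}\le\tfrac{q-1}{q}\int_\Omega u^{q+\gamma}+\|\kappa\|_{L^\infty}\int_\Omega u^q .
\]
Then I would dominate $\int_\Omega u^{q+\gamma}$ by the damping term: writing $u^{q+\gamma}=|x|^{-\frac{\alpha(q+\gamma)}{q+p-1}}\bigl(|x|^\alpha u^{q+p-1}\bigr)^{\frac{q+\gamma}{q+p-1}}$ and applying Hölder's inequality with exponents $\tfrac{q+p-1}{q+\gamma}$ and $\tfrac{q+p-1}{p-1-\gamma}$ yields
\[
\int_\Omega u^{q+\gamma}\le\Bigl(\int_\Omega|x|^{-\frac{\alpha(q+\gamma)}{p-1-\gamma}}\Bigr)^{\frac{p-1-\gamma}{q+p-1}}\Bigl(\int_\Omega|x|^\alpha u^{q+p-1}\Bigr)^{\frac{q+\gamma}{q+p-1}},
\]
and the weight integral is finite exactly because $\tfrac{\alpha(q+\gamma)}{p-1-\gamma}<n$ --- which is where the assumption $\alpha<2\tfrac{p-1-\gamma}{q+\gamma}$ (with $n\ge2$) is spent. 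Since $\tfrac{q+\gamma}{q+p-1}<1$ by $p>\gamma+1$, Young's inequality converts this into $\int_\Omega u^{q+\gamma}\le\varepsilon\int_\Omega|x|^\alpha u^{q+p-1}+C_\varepsilon$; choosing $\varepsilon$ small enough to absorb the first term on the right leaves $\frac{d}{dt}\int_\Omega u^q\le q\|\kappa\|_{L^\infty}\int_\Omega u^q+C$, and Grönwall's lemma gives $\sup_{t<\min\{T,T_{\max}\}}\int_\Omega u^q(\cdot,t)\le C(T,q)$ for every $T>0$ (the bound staying finite up to $T_{\max}$).

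For step (iii) I would invoke elliptic $W^{2,q/\gamma}$-regularity for $0=\Delta v-v+u^\gamma$ to bound $\|v(\cdot,t)\|_{W^{2,q/\gamma}(\Omega)}$ by a power of $\sup_t\|u(\cdot,t)\|_{L^q(\Omega)}$; since $q>\tfrac{n\gamma}{2}$, that is $q/\gamma>n/2$, Sobolev embedding yields $\|\nabla v(\cdot,t)\|_{L^\sigma(\Omega)}\le C(T)$ for some $\sigma>n$. Feeding this into the variation-of-constants representation of $u$ and running the by-now-standard Moser-type iteration for chemotaxis systems with gradient term in $L^\sigma$, $\sigma>n$ (the logistic contribution being harmless, $\kappa u$ linear and $-\mu u^p$ of the favourable sign), produces a bound $\sup_{t<\min\{T,T_{\max}\}}\|u(\cdot,t)\|_{L^\infty(\Omega)}\le C(T)$ for every $T>0$, so that, were $T_{\max}$ finite, $\sup_{t<T_{\max}}\|u(\cdot,t)\|_{L^\infty(\Omega)}<\infty$, contradicting the extensibility criterion. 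Hence $T_{\max}=\infty$.

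The step I expect to be the real obstacle is the $L^q$-estimate in (ii): because $\mu(|x|)$ may vanish at the origin, it is not a priori clear that $-\mu(|x|)u^p$ can control the destabilising term $\int_\Omega u^{q+\gamma}$ generated by cross-diffusion, and the smallness of $\alpha$ is precisely what renders the negative power of $|x|$ produced when trading the unweighted $L^{q+\gamma}$-norm against the weighted $L^{q+p-1}$-norm integrable near $0$. (If one instead retains the dissipation $\int_\Omega u^{q-2}|\nabla u|^2$ and argues through a Caffarelli-Kohn-Nirenberg type inequality localized near the origin, the same condition on $\alpha$ resurfaces as the compatibility relation between the exponents.) Local well-posedness and the $L^q\to L^\infty$ bootstrap are, by contrast, routine for parabolic-elliptic Keller-Segel systems.
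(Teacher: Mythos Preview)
Your proposal is correct and follows essentially the same route as the paper: local existence with a blow-up criterion, an $L^q$-estimate obtained by testing with $u^{q-1}$ and substituting $\Delta v=v-u^\gamma$ so that the cross-diffusion contributes $\tfrac{q-1}{q}\int_\Omega u^{q+\gamma}$, absorption of this term into the weighted damping $\int_\Omega|x|^\alpha u^{q+p-1}$ via a weighted Young/H\"older argument (the integrability of $|x|^{-\alpha(q+\gamma)/(p-1-\gamma)}$ being exactly the condition on $\alpha$), and finally an $L^q\to L^\infty$ bootstrap through elliptic $W^{2,q/\gamma}$-regularity and the fact that $q>n\gamma/2$ puts $\nabla v$ in some $L^\sigma$ with $\sigma>n$. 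The only cosmetic differences are that the paper applies Young's inequality pointwise (obtaining $u^{q+\gamma}\le\varepsilon|x|^\alpha u^{p+q-1}+C|x|^{-\alpha(q+\gamma)/(p-1-\gamma)}$ before integrating) whereas you apply H\"older on the integrals and then Young, and that for the final bootstrap the paper uses a direct semigroup/interpolation argument ($N(\bar t)\le C+CN(\bar t)^\theta$ with $\theta<1$) rather than a Moser iteration; both variants are standard and lead to the same conclusion.
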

Notice that the assumption on $\gamma$, i.e. $\gamma>1$, is required to guarantee the existence of a unique solution. Furthermore, we would like to refer to \cite[Proposition 1.4]{logisticsources1} for the case $n=2$ and $\gamma=1$.
\paragraph*{Plan of the paper} In Section 2 we state the local existence of classical solutions and certain preliminaries. Then, in Section 3 we derive a $L^q-$bound for the cell density $u$. In Section 4, we derive the $L^\infty-$bound for $u$ and finally, we provide the global existence of solution in Section 5. 
\section{Preliminaries}
In this section, we state the local existence of solutions to system \eqref{1}, which is based on a fixed point argument. 
\begin{lem}\label{localexistence} Under the assumption of Theorem \ref{main},
there exists $T_{max}\in (0,\infty]$ and a local-in-time classical solution $(u,v)$ to system \eqref{1} uniquely determined such that 
\begin{align}
u\in C^0(\overline{\Omega}\times[0,T_{max}))\cap C^{2,1}(\overline{\Omega}\times(0,T_{max}))\\\
v\in \cap_{q>n} C^0([0,T_{max});W^{1,q}(\Omega))\cap C^{2,0}(\overline{\Omega}\times(0,T_{max}))
\end{align}
and $\int_\Omega v(\cdot,t)\mathrm{d}x=0$ for all $t\in(0,T_{max}).$
Moreover, this solution is nonnegative in $u$, radially symmetric if $u_0$ is radially symmetric and such that if $T_{max}<\infty$, then 
\begin{align}
\lim_{t\rightarrow T_{max}}\sup \|u(\cdot,t)\|_{L^\infty(\Omega)}=\infty.\label{bc}
\end{align}
\end{lem}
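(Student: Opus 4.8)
The plan is to realize $(u,v)$ as a fixed point of a solution operator on a short time interval and then to bootstrap regularity. Fix $q>n$. For $T\in(0,1)$ to be chosen small, set
\[
S_T=\Big\{\,u\in C^0(\overline\Omega\times[0,T]):\ \sup_{0\le t\le T}\|u(\cdot,t)-u_0\|_{L^\infty(\Omega)}\le1\,\Big\},
\]
a closed set in $C^0(\overline\Omega\times[0,T])$; since $\gamma,p>1$ one may first replace $u^\gamma$ and $u^p$ by their odd $C^1$ extensions $|u|^{\gamma-1}u$ and $|u|^{p-1}u$ to $u\in\mathbb R$, recovering the original nonlinearities a posteriori once $u\ge0$ has been established. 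Given $u\in S_T$, I would first solve, for each fixed $t$, the linear elliptic problem $0=\Delta v-v+|u(\cdot,t)|^{\gamma-1}u(\cdot,t)$ with $\partial_\nu v=0$; elliptic $L^q$-theory produces a unique $v(\cdot,t)\in W^{2,q}(\Omega)$ with $\|v(\cdot,t)\|_{W^{2,q}(\Omega)}\le C\|u(\cdot,t)\|_{L^\infty(\Omega)}^{\gamma}$, so by $q>n$ and $W^{2,q}(\Omega)\hookrightarrow C^1(\overline\Omega)$ one gets an $x$-H\"older bound on $\nabla v$ and $\|\nabla v(\cdot,t)\|_{L^\infty(\Omega)}\le C\big(1+\|u_0\|_{L^\infty(\Omega)}\big)^{\gamma}$ on $[0,T]$, with continuity in $t$ inherited from that of $u$ (applying the elliptic estimate to differences, using $\gamma>1$). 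With this $v$ frozen I would then define $\Phi(u)$ by the Duhamel formula for the Neumann heat semigroup $(e^{t\Delta})_{t\ge0}$,
\[
\Phi(u)(t)=e^{t\Delta}u_0-\int_0^t e^{(t-s)\Delta}\,\nabla\!\cdot\!\big(u(s)\nabla v(s)\big)\,ds+\int_0^t e^{(t-s)\Delta}\,g(\cdot,u(s))\,ds .
\]

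The contraction is then run with the smoothing estimates $\|e^{t\Delta}w\|_{L^\infty}\le\|w\|_{L^\infty}$ and $\|e^{t\Delta}\nabla\!\cdot\!w\|_{L^\infty(\Omega)}\le C\,t^{-\frac12-\frac{n}{2q}}\|w\|_{L^q(\Omega)}$, where $t^{-\frac12-\frac{n}{2q}}$ is integrable near $0$ precisely because $q>n$; on $S_T$ the reaction $g(\cdot,u)=\kappa(|x|)u-\mu(|x|)u^{p}$ (in its extended form) is bounded and Lipschitz in $u$, which is exactly where $\kappa,\mu\in C^0([0,R])$, $p>1$ and the uniform bound $\|u\|_{L^\infty}\le1+\|u_0\|_{L^\infty}$ built into $S_T$ are used to tame the merely $C^1$, superlinear term. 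One then obtains $\sup_{t\le T}\|\Phi(u)(t)-u_0\|_{L^\infty(\Omega)}\le\eta(T)$ with $\eta(T)\to0$ as $T\to0$, so $\Phi$ maps $S_T$ into itself for small $T$; applying the same bounds to $\Phi(u_1)-\Phi(u_2)$ — splitting $u_1\nabla v_1-u_2\nabla v_2=(u_1-u_2)\nabla v_1+u_2\nabla(v_1-v_2)$ and using the Lipschitz dependence of $v_i$ on $u_i$ through the elliptic estimate — shows $\Phi$ is a contraction after shrinking $T$ further. Banach's fixed point theorem yields a unique mild solution $u\in C^0(\overline\Omega\times[0,T])$ with associated $v$; patching maximal intervals in the usual way gives $T_{max}\in(0,\infty]$ and uniqueness on $[0,T_{max})$.

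Next I would upgrade the mild solution to a classical one and extract the qualitative properties. Since $\nabla v\in C^0(\overline\Omega\times[0,T_{max}))$ with $x$-H\"older regularity, $u$ solves a linear parabolic equation with continuous coefficients and right-hand side, so parabolic $L^p$-estimates followed by Schauder estimates away from $t=0$ give $u\in C^{2,1}(\overline\Omega\times(0,T_{max}))$; elliptic Schauder then gives $v\in C^{2,0}(\overline\Omega\times(0,T_{max}))$, the elliptic estimate above gives $v\in\bigcap_{q>n}C^0([0,T_{max});W^{1,q}(\Omega))$, and the integral condition on $v$ follows by integrating the second equation of \eqref{1} over $\Omega$ and using $\partial_\nu v=0$. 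Nonnegativity of $u$ follows from the comparison principle applied to the scalar parabolic equation for $u$ (with $v$ viewed as given), since the extended reaction vanishes at $u=0$; once $u\ge0$ the extensions coincide with $u^\gamma,u^p$, so $(u,v)$ solves the original system. If $u_0$ is radially symmetric then, $\Omega$ and all coefficients being radial, any rotation of $(u,v)$ solves the same problem with the same data and hence equals $(u,v)$ by uniqueness, so $(u,v)$ is radial. For the blow-up alternative \eqref{bc} I would argue by contradiction: if $T_{max}<\infty$ but $\limsup_{t\to T_{max}}\|u(\cdot,t)\|_{L^\infty(\Omega)}<\infty$, then $\|u(\cdot,t)\|_{L^\infty(\Omega)}$ — hence, via the elliptic bound, $\|\nabla v(\cdot,t)\|_{L^\infty(\Omega)}$ — stays bounded on $[0,T_{max})$, so the fixed-point construction restarted at any $t_0<T_{max}$ runs for a length $\tau>0$ depending only on that bound and not on $t_0$; choosing $t_0>T_{max}-\tau$ extends the solution past $T_{max}$, contradicting maximality.

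I expect the genuine difficulty to lie in the first step: carrying the superlinear, only locally Lipschitz reaction $-\mu(|x|)u^{p}$ and the nonlocal chemotactic drift $\nabla\!\cdot\!(u\nabla v)$ simultaneously through a single contraction. This is what forces the construction onto the bounded set $S_T$ and requires calibrating the semigroup estimates so that the singular kernel $t^{-\frac12-\frac{n}{2q}}$ stays integrable — precisely the role of the assumption $q>n$ — while $\Phi$ remains at once a self-map and a contraction for one common small $T$. Once this is arranged, the regularity bootstrap, the continuation criterion and the positivity and symmetry statements are routine.
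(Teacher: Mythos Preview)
Your approach is correct and standard. The paper itself omits the proof entirely, referring to \cite[Theorem~2.1]{wt2007}; the argument sketched there (and in a commented-out draft proof in the paper's source) proceeds instead via Schauder's fixed point theorem on a bounded convex set in $C^0(\overline\Omega\times[0,T])$, rewriting $-\nabla\!\cdot(u\nabla v)=-\nabla u\cdot\nabla v+u(u^\gamma-v)$ using the elliptic equation and invoking parabolic H\"older estimates for compactness, with uniqueness established separately by an $L^2$ energy/Gronwall argument. Your Banach contraction route has the advantage that uniqueness comes for free, at the price of slightly sharper bookkeeping in the semigroup estimates (your use of the $L^q\!\to\!L^\infty$ smoothing for $e^{t\Delta}\nabla\!\cdot$, integrable precisely when $q>n$); the Schauder route needs only a self-map and compactness, so the estimates can be coarser, but one must then argue uniqueness by hand. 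Both are well-trodden for parabolic--elliptic Keller--Segel systems, and your handling of the superlinear reaction via the odd $C^1$ extensions and the bounded set $S_T$ is the natural way to localize the nonlinearity.

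One small remark: the condition $\int_\Omega v(\cdot,t)\,\mathrm{d}x=0$ in the statement does \emph{not} follow from the elliptic equation as written. Integrating $0=\Delta v-v+u^\gamma$ over $\Omega$ with $\partial_\nu v=0$ gives $\int_\Omega v=\int_\Omega u^\gamma\ge0$, which vanishes only if $u\equiv0$. This appears to be a misprint in the lemma (likely carried over from the related system $0=\Delta v-\overline{u^\gamma}+u^\gamma$), so your sentence that the integral condition ``follows by integrating the second equation'' is not quite accurate; but this is a defect of the stated lemma, not of your argument.
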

We omit the proof of Lemma \ref{localexistence}, since it is similar to the one in \cite[Theorem 2.1]{wt2007}.
In addition, the solution $u$ satisfies the following $L^1$ estimate:
\begin{lem}\label{l22} Under the assumption of Lemma \ref{localexistence} and for any nonnegative function $u_0\in C^0(\overline{\Omega})$, let $(u,v)$ be the local in time classical solution of system \eqref{1}. Then, $u$ satisfy 
\begin{align*}
\int_\Omega u(\cdot,t)\mathrm{d}x\leq K, \ \mbox{for all}\ \  t\in (0,T_{max}),\ \text{where} \ K=\max\left\{\int_\Omega u_0, \frac{k_1}{\mu_1}|\Omega|\right\}.
\end{align*} 
\end{lem}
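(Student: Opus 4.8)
The plan is to test the first equation of \eqref{1} against the constant function $1$, i.e. to integrate it over $\Omega$ and exploit that the diffusion and taxis terms are in divergence form. Since $u\in C^{2,1}(\overline{\Omega}\times(0,T_{max}))$, $v\in C^{2,0}(\overline{\Omega}\times(0,T_{max}))$ and $\partial_\nu u=\partial_\nu v=0$ on $\partial\Omega$, the divergence theorem gives $\int_\Omega\Delta u\,\mathrm{d}x=\int_{\partial\Omega}\partial_\nu u\,\mathrm{d}S=0$ and $\int_\Omega\nabla\cdot(u\nabla v)\,\mathrm{d}x=\int_{\partial\Omega}u\,\partial_\nu v\,\mathrm{d}S=0$ for every $t\in(0,T_{max})$. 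Writing $y(t):=\int_\Omega u(\cdot,t)\,\mathrm{d}x\ge0$ (this function is absolutely continuous on $(0,T_{max})$ by the regularity of $u$), one is left with
\begin{align*}
y'(t)=\int_\Omega\kappa(|x|)\,u\,\mathrm{d}x-\int_\Omega\mu(|x|)\,u^p\,\mathrm{d}x,\qquad t\in(0,T_{max}).
\end{align*}

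Next I would estimate the two terms on the right. For the growth term, with $k_1:=\max_{s\in[0,R]}\kappa(s)$ one has $\int_\Omega\kappa(|x|)u\,\mathrm{d}x\le k_1\,y(t)$. For the absorption term I would use \eqref{r}, namely $\mu(|x|)\ge\mu_1|x|^\alpha$, together with the weighted Hölder inequality (valid since $p>1$)
\begin{align*}
y(t)=\int_\Omega u\cdot|x|^{\alpha/p}\cdot|x|^{-\alpha/p}\,\mathrm{d}x\le\left(\int_\Omega|x|^\alpha u^p\,\mathrm{d}x\right)^{1/p}\left(\int_\Omega|x|^{-\alpha/(p-1)}\,\mathrm{d}x\right)^{(p-1)/p}.
\end{align*}
The weight here is integrable: since $q>\tfrac{n\gamma}{2}\ge\gamma$ and $\gamma>1$ we have $q+\gamma>2$, hence $\alpha<2\tfrac{p-1-\gamma}{q+\gamma}\le p-1-\gamma<p-1$, so $\alpha/(p-1)<1\le n$ and $c_0:=\big(\int_\Omega|x|^{-\alpha/(p-1)}\,\mathrm{d}x\big)^{-(p-1)}\in(0,\infty)$. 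Consequently $\int_\Omega\mu(|x|)u^p\,\mathrm{d}x\ge\mu_1 c_0\,y(t)^p$, and combining the two bounds yields the autonomous differential inequality
\begin{align*}
y'(t)\le k_1\,y(t)-\mu_1 c_0\,y(t)^p,\qquad t\in(0,T_{max}).
\end{align*}

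Finally I would conclude by an ODE comparison argument. The map $F(y):=k_1 y-\mu_1 c_0 y^p$ satisfies $F(y)\le0$ for $y\ge y^\ast:=(k_1/(\mu_1 c_0))^{1/(p-1)}$ and $F(y)\ge0$ for $0\le y\le y^\ast$; comparison with the associated Bernoulli equation $Y'=F(Y)$, $Y(0)=y(0)$, then gives $y(t)\le\max\{y(0),y^\ast\}$ for all $t\in(0,T_{max})$. Equivalently one may argue directly: whenever $y(t)\le y^\ast$ there is nothing to prove, and if $y(t_0)>y^\ast$ for some $t_0$ then $y$ is (weakly) decreasing as long as it stays above $y^\ast$, so it can never exceed $\max\{y(0),y^\ast\}$. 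This is a bound of the asserted form, the constant in the statement being the explicit value of $y^\ast$ (which by $\mu\ge\mu_1|x|^\alpha$ and the boundedness of $\Omega$ can be taken as in the claim, up to the precise numerical constant coming from the Hölder step).

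The routine part is the integration by parts; the genuine obstacle is the degeneracy of $\mu$ near the origin. One cannot bound $\mu(|x|)$ below by a positive constant, so the usual Jensen-type estimate $\int_\Omega u^p\ge|\Omega|^{1-p}y^p$ is unavailable, and the weighted Hölder inequality above is the device that replaces it — which is exactly the point at which the smallness of $\alpha$ (guaranteeing $\alpha/(p-1)<n$) is used.
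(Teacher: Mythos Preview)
The paper does not give its own proof here; it simply states that the argument is analogous to \cite[Lemma~2.2]{wang2020}. Your strategy --- integrate the $u$-equation over $\Omega$, use the Neumann conditions to kill the divergence terms, and close via an ODE comparison --- is the standard one and is carried out correctly. You have also correctly identified the one nontrivial point: since $\mu(|x|)\ge\mu_1|x|^\alpha$ may vanish at the origin, the usual Jensen estimate $\int_\Omega u^p\ge|\Omega|^{1-p}y^p$ is unavailable, and the weighted H\"older inequality is the right substitute. Your verification that $\alpha/(p-1)<1\le n$ (so that $|x|^{-\alpha/(p-1)}$ is integrable over $\Omega$) is exactly the condition needed.

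The only discrepancy is the explicit constant. The value $K=\max\{\int_\Omega u_0,\tfrac{k_1}{\mu_1}|\Omega|\}$ stated in the lemma is what one obtains when $\mu$ admits a uniform positive lower bound $\mu\ge\mu_1$; under the weaker hypothesis $\mu(s)\ge\mu_1 s^\alpha$ the natural constant is instead your $y^\ast=(k_1/(\mu_1 c_0))^{1/(p-1)}$, which depends on $\alpha$, $p$ and $\Omega$ through $c_0$. As the lemma is not invoked with its explicit constant anywhere else in the paper, this is a cosmetic rather than a substantive difference, and your closing remark already flags it appropriately.
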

The proof of Lemma \ref{l22} is similar to the proof of \cite[Lemma 2.2]{wang2020}. Furthermore, to make the paper self-contained we provide some basic inequalities.  
\begin{lem}[Interpolation inequality, \cite{ev}, page 623]Assume $1\leq s\leq r\leq t\leq \infty$ and 
$\frac{1}{r}=\frac{\theta}{s}+\frac{(1-\theta)}{t}.$
Suppose also $u\in L^s(\Omega)\cap L^t(\Omega).$ Then $u\in L^r(\Omega)$ and 
\begin{align}
\|u\|_{L^r(\Omega)}\leq \|u\|_{L^s(\Omega)}^\theta\|u\|_{L^t(\Omega)}^{1-\theta}.\label{ip}
\end{align}
\end{lem}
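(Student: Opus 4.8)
The plan is to reduce the entire statement to a single application of H\"older's inequality, after choosing the pair of conjugate exponents dictated by the interpolation relation $\frac{1}{r}=\frac{\theta}{s}+\frac{(1-\theta)}{t}$. First I would dispose of the degenerate endpoints in $\theta$: if $\theta=0$ the relation forces $r=t$ and if $\theta=1$ it forces $r=s$, so in either case the claimed inequality is an identity and there is nothing to prove. Hence I may assume $\theta\in(0,1)$, which guarantees $r\theta>0$ and $r(1-\theta)>0$, so that the factorisation below makes sense.

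For the main case with $t<\infty$, I would split the integrand as $|u|^{r}=|u|^{r\theta}\cdot|u|^{r(1-\theta)}$ and apply H\"older's inequality with the exponents $p_1=\frac{s}{r\theta}$ and $p_2=\frac{t}{r(1-\theta)}$. The interpolation relation is exactly what makes these H\"older-conjugate, since $\frac{1}{p_1}+\frac{1}{p_2}=r\big(\frac{\theta}{s}+\frac{1-\theta}{t}\big)=1$; moreover, because each of $\frac{\theta}{s}$ and $\frac{1-\theta}{t}$ is individually bounded above by $\frac1r$ (the omitted term being nonnegative), we get $r\theta\le s$ and $r(1-\theta)\le t$, i.e. $p_1,p_2\ge 1$, so H\"older is applicable. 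The exponents are tuned so that $|u|^{r\theta}\in L^{p_1}$ raises $u$ to the power $s$ and $|u|^{r(1-\theta)}\in L^{p_2}$ raises it to the power $t$, giving $\int_\Omega|u|^{r}\le\big(\int_\Omega|u|^{s}\big)^{r\theta/s}\big(\int_\Omega|u|^{t}\big)^{r(1-\theta)/t}$. Taking $r$-th roots yields $\|u\|_{L^r}\le\|u\|_{L^s}^{\theta}\|u\|_{L^t}^{1-\theta}$, and since $u\in L^s(\Omega)\cap L^t(\Omega)$ the right-hand side is finite, which simultaneously establishes $u\in L^r(\Omega)$.

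The remaining case $t=\infty$ I would treat directly rather than through the generic splitting, because the second factor collapses to an $L^\infty$ bound. Here the relation reads $\frac{1}{r}=\frac{\theta}{s}$, so that $r\theta=s$ and $s/r=\theta$, and I would estimate $\int_\Omega|u|^{r}=\int_\Omega|u|^{r(1-\theta)}|u|^{r\theta}\le\|u\|_{L^\infty}^{r(1-\theta)}\int_\Omega|u|^{s}$. Taking $r$-th roots and using $s/r=\theta$ reproduces $\|u\|_{L^r}\le\|u\|_{L^s}^{\theta}\|u\|_{L^\infty}^{1-\theta}$, with finiteness following as before.

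I do not anticipate any serious obstacle. The only point requiring care is the bookkeeping that the chosen $p_1,p_2$ are genuinely conjugate and genuinely at least $1$, which is precisely where the relation $\frac{1}{r}=\frac{\theta}{s}+\frac{1-\theta}{t}$ (together with $s\le r\le t$) enters, and the separate handling of the endpoints $\theta\in\{0,1\}$ and $t=\infty$, where the generic H\"older splitting is either vacuous or degenerate.
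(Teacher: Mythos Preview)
Your proof is correct and is exactly the standard argument (the one given in Evans, which the paper cites). The paper itself does not prove this lemma at all; it merely records it as a preliminary with a reference, so there is no ``paper's own proof'' to compare against. Your H\"older splitting $|u|^{r}=|u|^{r\theta}|u|^{r(1-\theta)}$ with conjugate exponents $s/(r\theta)$ and $t/(r(1-\theta))$, together with the bookkeeping for the endpoints $\theta\in\{0,1\}$ and $t=\infty$, is precisely the textbook route and nothing is missing.
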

\begin{lem}[Lemma 5.1 \cite{tem}]\label{lemmaODE}
Let $y(t)\geq 0$ satisfy 
\begin{align*}
\begin{cases}
\displaystyle{\frac{\mathrm{d}y}{\mathrm{d}t}+c_1y^\alpha\leq c_2,}\\
y(0)=y_0
\end{cases}
\end{align*}
with some constants $c_1,c_2>0$ and $\theta\geq 1.$ Then we have 
\begin{align*}
y(t)\leq \max\bigg\{y_0,\bigg(\frac{c_2}{c_1}\bigg)^{\frac{1}{\alpha}}\bigg\}, \ t>0.
\end{align*}
\end{lem}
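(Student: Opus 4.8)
The plan is to argue by a barrier / first-crossing comparison, exploiting the single structural fact that singles out the threshold $\bar y := (c_2/c_1)^{1/\alpha}$. Since $\alpha \ge 1$, the map $s \mapsto s^\alpha$ is increasing on $[0,\infty)$, and by the very definition of $\bar y$ we have $c_1\bar y^\alpha = c_2$. Consequently, at any time $t$ with $y(t) > \bar y$ one gets $c_1 y(t)^\alpha > c_1 \bar y^\alpha = c_2$, so the differential inequality forces $\frac{\mathrm{d}y}{\mathrm{d}t}(t) \le c_2 - c_1 y(t)^\alpha < 0$. In words: whenever $y$ lies strictly above the threshold it is strictly decreasing. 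This is the whole engine of the proof.

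Set $M := \max\{y_0,\bar y\}$, so that $y(0)=y_0 \le M$ and $M \ge \bar y$; I claim $y(t) \le M$ for all $t>0$. Suppose not, so that $y(t_1) > M$ for some $t_1 > 0$, and let $t_0 := \sup\{t \in [0,t_1] : y(t) \le M\}$, which is well defined since $0$ belongs to the indexing set. By continuity of $y$ the value $y(t_0)$ is a limit of numbers $\le M$, hence $y(t_0) \le M$; on the other hand $y(t) > M$ for every $t \in (t_0,t_1]$ by maximality of $t_0$, so letting $t \downarrow t_0$ gives $y(t_0) \ge M$. Therefore $y(t_0) = M$.

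Now on $(t_0,t_1]$ we have $y(t) > M \ge \bar y$, so by the engine above $\frac{\mathrm{d}y}{\mathrm{d}t}(t) < 0$ throughout this interval. Integrating from $t_0$ to $t_1$ then yields $y(t_1) = y(t_0) + \int_{t_0}^{t_1} \frac{\mathrm{d}y}{\mathrm{d}t}\,\mathrm{d}t < y(t_0) = M$, contradicting $y(t_1) > M$. Hence no such $t_1$ exists and the asserted bound $y(t) \le \max\{y_0,(c_2/c_1)^{1/\alpha}\}$ holds for all $t>0$.

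I expect no serious obstacle here; conceptually the statement is nothing more than the comparison principle applied to the constant supersolution $z \equiv M$, which satisfies $z' + c_1 z^\alpha = c_1 M^\alpha \ge c_1 \bar y^\alpha = c_2$ together with $z(0) \ge y(0)$. The only points demanding a little care are the first-crossing construction of $t_0$ (identifying $y(t_0)=M$ through the two one-sided limits) and keeping the monotonicity strict above the threshold, which is precisely what covers the boundary case $M = \bar y$. The hypothesis $\alpha \ge 1$ enters solely to guarantee that $s \mapsto s^\alpha$ is monotone on $[0,\infty)$, which is all the argument actually uses.
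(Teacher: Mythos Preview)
Your proof is correct. The paper does not prove this lemma at all; it merely quotes it from \cite{tem} and uses it later, so there is no ``paper's own proof'' to compare against. Your first-crossing/comparison argument is the standard route and is cleanly executed: the identification $y(t_0)=M$ via the two one-sided limits is the only delicate point, and you handle it properly.

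One minor remark on your closing comment: the condition $\alpha\ge 1$ is \emph{not} needed for the monotonicity of $s\mapsto s^\alpha$ on $[0,\infty)$ --- that holds for every $\alpha>0$. Your argument therefore actually proves the lemma for all $\alpha>0$, which is slightly stronger than stated. (What $\alpha\ge 1$ would buy is convexity or a Lipschitz bound near $0$, neither of which your barrier argument uses.)
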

In addition, we will need the following lemma, which is established in \cite[Lemma 1.3]{mw12010} and reads as follows:
\begin{lem}\label{11}
Let $(e^{t\Delta})_{t\geq 0}$ be the Neumann heat semigroup in $\Omega$, and $\lambda_1>0$ denote the first nonzero eigenvalue of $-\Delta$ in $\Omega$ under the Neumann boundary conditions. Then there exist constants  $C_1,\ldots, C_4$ depending on $\Omega$ only which have the following properties
\begin{itemize}
\item[(i)] If $1\leq q\leq p\leq\infty$ then
\begin{align*}
\|e^{t\Delta}w\|_{L^p(\Omega)}\leq C_1\bigg(1+t^{-\frac{n}{2}(\frac{1}{q}-\frac{1}{p}})\bigg)e^{-\lambda_1t}\|w\|_{L^q(\Omega)} \ \ \mbox{for all}\ t>0
\end{align*}
holds for all $w\in L^q(\Omega)$ satisfying $\int_\Omega w=0.$
\item[(ii)] If $1\leq q\leq p\leq \infty$ then 
\begin{align*}
\|\nabla e^{t\Delta}w\|_{L^p(\Omega)}\leq C_2\bigg(1+t^{-\frac{1}{2}-\frac{n}{2}(\frac{1}{q}-\frac{1}{p}})\bigg)e^{-\lambda_1t}\|w\|_{L^q(\Omega)} \ \ \mbox{for all}\ \ t>0
\end{align*}
is true for each $w\in L^q(\Omega).$
\item[(iii)] If $2\leq p<\infty$ then $$\|\nabla e^{t\Delta}w\|_{L^p(\Omega)}\leq C_3e^{-\lambda_1t}\|w\|_{L^p(\Omega)}$$ for all $t>0$ 
is valid for all $w\in W^{1,p}(\Omega).$
\end{itemize}
\item[(iv)] Let $1<q\leq p<\infty$. Then 
\begin{align*}
\|e^{t\Delta}\nabla \cdot w\|_{L^p(\Omega)}\leq C_4\bigg(1+t^{-\frac{1}{2}-\frac{n}{2}(\frac{1}{q}-\frac{1}{p}})\bigg)e^{-\lambda_1t}\| w\|_{L^q(\Omega)} \ \ \mbox{for all}\ \ t>0
\end{align*}
holds for all $w\in(C_0^\infty(\Omega))^n.$ Consequently, for all $t>0$ the operator $e^{t\Delta}\nabla \cdot $ possesses a uniquely determined extension to an operator from $L^q(\Omega)$ into $L^p(\Omega)$, with norm controlled according to the last inequality. 
\end{lem}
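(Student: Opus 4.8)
The plan is to build every estimate on two pillars: a \emph{short-time ultracontractive smoothing} bound and a \emph{long-time spectral-gap decay} bound, which are then patched together to produce the factor $(1+t^{-\theta})e^{-\lambda_1 t}$. First I would record the structural facts about the Neumann heat semigroup on the bounded smooth domain $\Omega$: the operator $-\Delta$ with Neumann conditions has compact resolvent, hence a discrete spectrum $0=\lambda_0<\lambda_1\le\lambda_2\le\cdots$ with an $L^2$-orthonormal eigenbasis $\{\phi_k\}$, where $\phi_0\equiv|\Omega|^{-1/2}$, so that $e^{t\Delta}w=\sum_{k\ge0}e^{-\lambda_k t}\langle w,\phi_k\rangle\phi_k$. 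Two elementary consequences I would use repeatedly are that $e^{t\Delta}$ is a contraction on every $L^r(\Omega)$ and that it conserves the mean, $\int_\Omega e^{t\Delta}w=\int_\Omega w$ (integrate $\Delta e^{t\Delta}w$ and use the Neumann condition); in particular the mean-zero subspace is invariant. Finally, the Gaussian bound for the Neumann heat kernel on a smooth bounded domain yields the \emph{ultracontractivity} $\|e^{t\Delta}\|_{L^1\to L^\infty}\le Ct^{-n/2}$ and the gradient bound $\|\nabla e^{t\Delta}\|_{L^1\to L^\infty}\le Ct^{-(n+1)/2}$ for $0<t\le1$.

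For (i), the short-time part comes from interpolating (Riesz--Thorin) the ultracontractive bound against the $L^1$- and $L^\infty$-contractivity, giving $\|e^{t\Delta}w\|_{L^p}\le Ct^{-\frac n2(\frac1q-\frac1p)}\|w\|_{L^q}$ for $0<t\le1$; since $e^{-\lambda_1 t}$ is bounded below on $(0,1]$ this already has the required form there. For $t\ge2$ and mean-zero $w$ I would route the estimate through $L^2$: write $e^{t\Delta}w=e^{\Delta}\big(e^{(t-2)\Delta}(e^{\Delta}w)\big)$, use $\|e^{\Delta}\|_{L^q\to L^2}\le C$ and $\|e^{\Delta}\|_{L^2\to L^p}\le C$ (from the $t=1$ smoothing together with the embeddings of the bounded domain), and insert the key spectral estimate $\|e^{(t-2)\Delta}\|_{L^2\to L^2}\le e^{-\lambda_1(t-2)}$ valid on mean-zero functions (directly from the eigenexpansion). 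This gives $Ce^{-\lambda_1 t}\|w\|_{L^q}$, and combining the two regimes (the intermediate range $1\le t\le2$ being harmless by the uniform $L^q$--$L^p$ boundedness of the semigroup) yields (i).

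Estimate (ii) follows the same template with the gradient kernel bound supplying the short-time factor $t^{-\frac12-\frac n2(\frac1q-\frac1p)}$ on $(0,1]$. The point that removes the mean-zero hypothesis is that $\nabla e^{t\Delta}w=\nabla e^{t\Delta}\tilde w$ with $\tilde w:=w-|\Omega|^{-1}\int_\Omega w$ mean-zero and $\|\tilde w\|_{L^q}\le C\|w\|_{L^q}$; for $t\ge2$ I would then write $\nabla e^{t\Delta}\tilde w=\nabla e^{\Delta}\big(e^{(t-1)\Delta}\tilde w\big)$, bound the inner factor in $L^2$ by (i) (which decays like $e^{-\lambda_1(t-1)}$ on the mean-zero $\tilde w$), and apply $\|\nabla e^{\Delta}\|_{L^2\to L^p}\le C$. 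Estimate (iv) is then obtained by duality: for $w\in(C_0^\infty(\Omega))^n$ and $\|\phi\|_{L^{p'}}\le1$, self-adjointness of $e^{t\Delta}$ and integration by parts (no boundary term, as $w$ is compactly supported) give $\int_\Omega(e^{t\Delta}\nabla\cdot w)\phi=-\int_\Omega w\cdot\nabla e^{t\Delta}\phi$, and applying (ii) to $\phi$ from $L^{p'}$ to $L^{q'}$ together with $\frac1{p'}-\frac1{q'}=\frac1q-\frac1p$ produces exactly the claimed exponent; taking the supremum over $\phi$ finishes it.

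The genuinely delicate part is (iii), where the right-hand side must be read as $\|\nabla w\|_{L^p}$ (as in \cite{mw12010}), since a bound by $\|w\|_{L^p}$ uniform as $t\to0^+$ would be false. For $p=2$ the eigenexpansion gives it for free: $\|\nabla e^{t\Delta}w\|_{L^2}^2=\sum_{k\ge1}\lambda_k e^{-2\lambda_k t}|\langle w,\phi_k\rangle|^2\le e^{-2\lambda_1 t}\|\nabla w\|_{L^2}^2$. For general $p\in[2,\infty)$ the obstacle is the small-time bound $\|\nabla e^{t\Delta}w\|_{L^p}\le C\|\nabla w\|_{L^p}$, which is not visible from the scalar kernel bounds and instead rests on the fact that the Neumann heat semigroup is a bounded analytic semigroup on $W^{1,p}(\Omega)$ for smooth bounded $\Omega$ (equivalently, on the uniform boundedness of the gradient-commutator/Riesz-type operators associated with $-\Delta_N$). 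Granting this, for large $t$ I would split off the mean and combine (ii) at $q=p$ with the Poincar\'e inequality $\|\tilde w\|_{L^p}\le C\|\nabla w\|_{L^p}$ to recover the decay, and then patch the two time regimes. I expect this analyticity/$W^{1,p}$-mapping input to be the main technical hurdle; everything else reduces to interpolation, the spectral gap, and duality.
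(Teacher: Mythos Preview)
The paper does not prove this lemma at all: it is quoted verbatim as a preliminary result from \cite[Lemma~1.3]{mw12010}, so there is no in-paper argument to compare your proposal against. Your sketch is in fact essentially the strategy of the cited source---short-time Gaussian/ultracontractive kernel bounds interpolated via Riesz--Thorin, long-time decay obtained by routing through $L^2$ and invoking the spectral gap on the mean-zero subspace, and part~(iv) by duality from part~(ii)---so nothing in your outline is off-track.

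Two remarks are worth making. First, you are right that the stated part~(iii) carries a typo inherited into this paper: the right-hand side in Winkler's original is $C_3 e^{-\lambda_1 t}\|\nabla w\|_{L^p}$, not $\|w\|_{L^p}$, and your observation that the latter would fail as $t\to0^+$ is the correct diagnosis. Second, your identification of (iii) as the one item requiring a substantive external ingredient (boundedness of the Neumann heat semigroup on $W^{1,p}$ for $2\le p<\infty$, equivalently an $L^p$ Riesz-transform bound for $-\Delta_N$) matches the treatment in \cite{mw12010}; parts (i), (ii), (iv) are, as you say, interpolation, the spectral gap, and duality.
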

\section{$L^q-$bound  for $u$}
In this section, we establish the $L^q-$bound for the solution $u$ of the system~\eqref{1}.
\begin{lem}\label{l11}
Assume that $\gamma>1$, $p>\gamma+1$, $q>\frac{n\gamma}{2}$ and let 
$0<\alpha<2\frac{p-1-\gamma}{q+\gamma}$. Then
\begin{align*}
\|u(\cdot,t)\|_{L^q(\Omega)}\leq c \ \ \mbox{for all}\ \ t\in (0,T_{max}).
\end{align*}
\end{lem}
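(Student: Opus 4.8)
The plan is to test the first equation with $u^{q-1}$ and derive a differential inequality for $y(t) := \int_\Omega u^q \,\mathrm{d}x$ to which Lemma~\ref{lemmaODE} applies. First I would multiply the first equation of \eqref{1} by $u^{q-1}$ and integrate over $\Omega$; using the Neumann boundary conditions, integration by parts gives
\begin{align*}
\frac{1}{q}\frac{\mathrm{d}}{\mathrm{d}t}\int_\Omega u^q
+ (q-1)\int_\Omega u^{q-2}|\nabla u|^2
= (q-1)\int_\Omega u^{q-1}\nabla u\cdot\nabla v
+ \int_\Omega \kappa(|x|) u^q - \int_\Omega \mu(|x|) u^{p+q-1}.
\end{align*}
For the chemotaxis term I would write $u^{q-1}\nabla u = \frac{1}{q}\nabla(u^q)$ and integrate by parts once more to obtain $-\frac{q-1}{q}\int_\Omega u^q \Delta v$; then, invoking the second equation $\Delta v = v - u^\gamma$, this becomes $\frac{q-1}{q}\int_\Omega u^{q+\gamma} - \frac{q-1}{q}\int_\Omega u^q v$. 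Since $v\geq 0$ (by Lemma~\ref{localexistence}), the term $-\frac{q-1}{q}\int_\Omega u^q v$ is nonpositive and can be discarded. Using $\kappa \le \|\kappa\|_{L^\infty} =: k_1$ and the hypothesis \eqref{r} that $\mu(|x|)\ge \mu_1|x|^\alpha$, one arrives at
\begin{align*}
\frac{1}{q}\frac{\mathrm{d}}{\mathrm{d}t}\int_\Omega u^q
+ \frac{4(q-1)}{q^2}\int_\Omega |\nabla u^{q/2}|^2
\le \frac{q-1}{q}\int_\Omega u^{q+\gamma}
+ k_1\int_\Omega u^q
- \mu_1\int_\Omega |x|^\alpha u^{p+q-1}.
\end{align*}

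The crux of the argument is to control the superlinear production term $\int_\Omega u^{q+\gamma}$ by a combination of the (good) dissipation term $\int_\Omega |\nabla u^{q/2}|^2$, the damping term $\int_\Omega |x|^\alpha u^{p+q-1}$, and lower-order quantities bounded via Lemma~\ref{l22}. Here is where the radial structure and the precise exponent condition $0<\alpha < 2\frac{p-1-\gamma}{q+\gamma}$ enter. On the region where $|x|$ is bounded below by a fixed constant, $\int_{|x|>\delta} u^{q+\gamma}$ is controlled by $\int_\Omega |x|^\alpha u^{p+q-1}$ plus a constant, since $p+q-1 > q+\gamma$ and $|x|^\alpha$ is bounded below there (an application of Young's inequality in the form $a^{q+\gamma}\lesssim \varepsilon a^{p+q-1} + C_\varepsilon$). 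Near the origin, $|x|^\alpha$ degenerates, so there one must instead use the diffusion: I would apply a Gagliardo–Nirenberg inequality to $w := u^{q/2}$, writing $\int_\Omega u^{q+\gamma} = \|w\|_{L^{2(q+\gamma)/q}}^{2(q+\gamma)/q}$ and interpolating between $\|\nabla w\|_{L^2}$ and $\|w\|_{L^{2/\gamma \cdot \text{something}}}$ controlled by the $L^1$ bound $\int_\Omega u \le K$; the condition $q > \frac{n\gamma}{2}$ is exactly what makes the resulting exponent of $\|\nabla w\|_{L^2}$ strictly less than $2$, so that Young's inequality absorbs it into the dissipation term at the cost of a constant. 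Combining the two regions yields
\begin{align*}
\frac{q-1}{q}\int_\Omega u^{q+\gamma}
\le \frac{2(q-1)}{q^2}\int_\Omega |\nabla u^{q/2}|^2
+ \frac{\mu_1}{2}\int_\Omega |x|^\alpha u^{p+q-1} + C.
\end{align*}

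Substituting this back, the gradient term and the damping term are both absorbed, and one is left with
\begin{align*}
\frac{\mathrm{d}}{\mathrm{d}t}\int_\Omega u^q + k_1 q\int_\Omega u^q \le C',
\end{align*}
after again using $|x|^\alpha u^{p+q-1}\gtrsim u^q - C$ on the outer region and the $L^1$-bound near the origin to bound $\int_\Omega u^q$ from below in terms of $\int_\Omega |x|^\alpha u^{p+q-1}$ — or, more simply, by adding a further Young's inequality step $k_1 q \int_\Omega u^q \le \frac{\mu_1}{2}\int_\Omega|x|^\alpha u^{p+q-1} + C$ on the outer part together with a GN/$L^1$ estimate near $0$. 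In any case one obtains an ODI of the form $y' + c_1 y \le c_2$ with $c_1,c_2>0$, and Lemma~\ref{lemmaODE} (with $\alpha=1$ there) gives $y(t)\le \max\{y(0), c_2/c_1\}$, i.e. the desired uniform $L^q$ bound. The main obstacle is the bookkeeping in the split near the origin: making sure the weight $|x|^\alpha$'s degeneracy is compensated by diffusion precisely when $\alpha$ satisfies the stated bound, and tracking that the Gagliardo–Nirenberg exponent stays subcritical under $q>\frac{n\gamma}{2}$; the rest is routine Young's-inequality absorption.
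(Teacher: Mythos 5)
Your opening steps coincide with the paper's proof: testing the first equation with $u^{q-1}$, converting the chemotaxis term through the elliptic equation ($q\int_\Omega u^{q-1}\nabla u\cdot\nabla v=-\int_\Omega u^qv+\int_\Omega u^{q+\gamma}$, discarding the sign-favorable part), and invoking $\mu(|x|)\ge\mu_1|x|^\alpha$ all match. The genuine gap is your treatment of $\int_\Omega u^{q+\gamma}$ near the origin. With $w=u^{q/2}$, the only norm of $w$ controlled by Lemma~\ref{l22} is $\|w\|_{L^{2/q}}$ (i.e.\ $\int_\Omega u\le K$), and the Gagliardo--Nirenberg interpolation of $\|w\|_{L^{2(q+\gamma)/q}}^{2(q+\gamma)/q}$ between $\|\nabla w\|_{L^2}$ and $\|w\|_{L^{2/q}}$ produces the exponent $\frac{2(q+\gamma-1)}{\,q-1+\frac{2}{n}\,}$ on $\|\nabla w\|_{L^2}$, which is strictly below $2$ if and only if $\gamma<\frac{2}{n}$ --- impossible here, since $\gamma>1$ and $n\ge2$, and independent of $q$. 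So the hypothesis $q>\frac{n\gamma}{2}$ does not make this step subcritical (in the paper it is used only later, for the elliptic regularity and embedding of $v$ in Lemma~\ref{l2}), and restricting to a small ball around the origin does not help, because the criticality of these exponents is scale invariant. Consequently the asserted inequality $\frac{q-1}{q}\int_\Omega u^{q+\gamma}\le\frac{2(q-1)}{q^2}\int_\Omega|\nabla u^{q/2}|^2+\frac{\mu_1}{2}\int_\Omega|x|^\alpha u^{p+q-1}+C$ is unjustified: diffusion plus the $L^1$ bound cannot compensate the degeneracy of the weight in this regime.

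The paper closes this step without any splitting and without using the dissipation term at all: apply Young's inequality pointwise \emph{with the weight kept}, with exponents $\frac{p+q-1}{q+\gamma}$ and $\frac{p+q-1}{p-1-\gamma}$ (legitimate since $p>\gamma+1$), to get $u^{q+\gamma}\le\epsilon_1|x|^\alpha u^{p+q-1}+c(\epsilon_1)\,|x|^{-\alpha\frac{q+\gamma}{p-1-\gamma}}$. The weighted damping term then absorbs the whole production term, and the leftover is a function of $x$ alone whose integral over $\Omega$ is finite precisely because $\alpha\frac{q+\gamma}{p-1-\gamma}<2$ --- this is where the hypothesis on $\alpha$ actually enters, not through diffusion. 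The same device also handles the $\kappa u^q$ term if you want genuine damping in the final differential inequality, since $\alpha\frac{q}{p-1}<\alpha\frac{q+\gamma}{p-1-\gamma}<2$; with that, your concluding step $y'+c_1y\le c_2$ and the application of Lemma~\ref{lemmaODE} are fine. In short, the skeleton of your proposal is the paper's, but the near-origin Gagliardo--Nirenberg argument must be replaced by the weighted Young inequality; as written, the proposal does not prove the lemma.
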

\begin{proof}
Testing the first equation of the system \eqref{1} with $u^{q-1}, q>\frac{n\gamma}{2}>1$, we get
\begin{align*}
\int_\Omega u_t u^{q-1}\mathrm{d}x=\int_\Omega \Delta u u^{q-1}\mathrm{d}x-&\int_\Omega \nabla\cdot(u\nabla v)\cdot u^{q-1}+\int_\Omega \kappa(|x|)u u^{q-1}\mathrm{d}x-\int_\Omega \mu(|x|) u^p u^{q-1}\mathrm{d}x.
\end{align*} 
Integration by parts leads to 
\begin{align}
\begin{split}
\frac{1}{q}\frac{\mathrm{d}}{\mathrm{d}t}\int_\Omega u^q\mathrm{d}x=&-\frac{4(q-1)}{q^2}\int_\Omega |\nabla u^{\frac{q}{2}} |^2\mathrm{d}x+(q-1)\int_\Omega u^{q-1}\nabla v\cdot \nabla u\mathrm{d}x+\int_\Omega \kappa(|x|)u^q\mathrm{d}x-\int_\Omega \mu(|x|) u^{p+q-1}\mathrm{d}x.
\end{split}
\label{eq2}
\end{align} 
Next, multiplying the second equation of the system \eqref{1} by $u^{q}$, we get 
\begin{align*}
\int_\Omega \Delta v u^{q}\mathrm{d}x-\int_\Omega u^{q} v\mathrm{d}x+\int_\Omega u^{q+\gamma}\mathrm{d}x=0
\end{align*}
and integration by parts yields 
\begin{align*}
-q\int_\Omega  u^{q-1}\nabla u\nabla v\mathrm{d}x-\int_\Omega u^{q} v\mathrm{d}x+\int_\Omega u^{q+\gamma}\mathrm{d}x=0.
\end{align*}
This implies
\begin{align}
q\int_\Omega  u^{q-1}\nabla u\nabla v\mathrm{d}x=-\int_\Omega u^{q} v\mathrm{d}x+\int_\Omega u^{q+\gamma}\mathrm{d}x,\label{eq3}
\end{align}
Substituting \eqref{eq3} into \eqref{eq2}, we obtain
\begin{align}
\begin{split}
\frac{1}{q}\frac{\mathrm{d}}{\mathrm{d}t}\int_\Omega u^q\mathrm{d}x+\frac{4(q-1)}{q^2}\int_\Omega |\nabla u^{\frac{q}{2}} |^2\mathrm{d}x&\leq \frac{(q-1)}{q}\int_\Omega u^{q+\gamma} \mathrm{d}x+\int_\Omega \kappa(|x|)u^q\mathrm{d}x-\int_\Omega \mu(|x|) u^{p+q-1}\mathrm{d}x.
\end{split}\label{eq4}
\end{align} 
Estimating the integrand of the first term on the right-hand side of \eqref{eq4}, we get 
  \begin{align}
 \begin{split}
u^{q+\gamma} 
&\leq \epsilon_1 |x|^\alpha(u^{q+\gamma})^{\frac{p+q-1}{q+\gamma}}+\frac{p-1-\gamma}{p+q-1} (|x|^\alpha\epsilon_1)^{-\frac{q+\gamma}{p+q-1}\frac{p+q-1}{p-1-\gamma}} 
=\epsilon_1  |x|^\alpha u^{p+q-1}+\frac{p-1-\gamma}{p+q-1}(|x|^\alpha\epsilon_1)^{-\frac{q+\gamma}{p-1-\gamma}},
\end{split}
\label{eq5}
 \end{align} 
 where we used Young's inequality with the exponents $\frac{p+q-1}{q+\gamma}$ and $\frac{p+q-1}{p-1-\gamma}$.  Due to $p>\gamma+1$ we have $p-\gamma-1>0$ and $p+q-1>q+\gamma$. Thus, $0<\frac{q+\gamma}{p+q-1}<1$ and $0<\frac{p-1-\gamma}{p+q-1}<1$. Using \eqref{r}, we get
 \begin{align}
 -\int_\Omega \mu(|x|)u^{p+q-1}\mathrm{d}x\leq -\mu_1\int_\Omega |x|^\alpha u^{p+q-1}\mathrm{d}x.\label{g2}
 \end{align}
 Choosing $\epsilon_1=\frac{q}{q-1}\mu_1$, then inequality \eqref{eq5} becomes
  \begin{align}
  \begin{split}
\int_\Omega u^{q+\gamma} \mathrm{d}x
\leq &\frac{q}{q-1}\mu_1 \int_\Omega |x|^\alpha u^{p+q-1} \mathrm{d}x+c_1(p,q,\mu_1,\gamma)\int_\Omega|x|^{-\alpha\frac{q+\gamma}{p-1-\gamma}} \mathrm{d}x\\
\leq &\frac{q}{q-1}\mu_1 \int_\Omega |x|^\alpha u^{p+q-1} \mathrm{d}x+c_1(p,q,\mu_1,\gamma)\int_0^R r^{1-\alpha\frac{q+\gamma}{p-1-\gamma}}\mathrm{d}r.
\end{split}
\label{eq6} 
 \end{align} 
 Substituting \eqref{g2}, \eqref{eq6} into \eqref{eq4}, we arrive at
\begin{align}
\frac{1}{q}\frac{\mathrm{d}}{\mathrm{d}t}\int_\Omega u^q\mathrm{d}x+\frac{4(q-1)}{q^2}\int_\Omega |\nabla u^{\frac{q}{2}} |^2\mathrm{d}x
\leq &\ \mu_1\int_\Omega  |x|^\alpha u^{p+q-1} \mathrm{d}x+c_1(p,q,\mu_1,\gamma)\int_0^R r^{1-\alpha\frac{q+\gamma}{p-1-\gamma}}\mathrm{d}r\nonumber\\
&+\|\kappa\|_{L^\infty(\Omega)}\int_\Omega u^q\mathrm{d}x-\mu_1\int_\Omega|x|^\alpha u^{p+q-1}\mathrm{d}x\nonumber\\
\leq &\ c \int_\Omega u^q\mathrm{d}x+c_1(p,q,\mu_1,\gamma_1)\int_0^R r^{1-\alpha\frac{q+\gamma}{p-1-\gamma}}\mathrm{d}r
\leq \ c_2 \int_\Omega u^q\mathrm{d}x+c_1.
\label{7}
\end{align}
In order to  ensure the integral $ \int_0^R r^{1-\alpha\frac{q+\gamma}{p-1-\gamma}}\mathrm{d}r$ is well defined, we need the condition $1-\alpha\frac{q+\gamma}{p-1-\gamma}>-1.$  From this, we can derive  the condition for $\alpha$, i.e. $0<\alpha<2\frac{p-1-\gamma}{q+\gamma}$. 
If we choose $\alpha=2\frac{p-1-\gamma}{q+\gamma}$ or $\alpha>2\frac{p-1-\gamma}{q+\gamma}$, then we get a contradiction for $q>1.$
Set $y(t):=\int_\Omega u^q\mathrm{d}x$. Then, \eqref{7} becomes an ODE $y'(t)\leq c_2y(t)+c_1.$ 
Finally, Lemma \ref{lemmaODE} yields $y(t)\leq \max\bigg\{y(0),\frac{c_1}{c_2}\bigg\}$. 
This implies $\|u(\cdot,t)\|_{L^q(\Omega)}\le c$ for all $t\in(0,T_{max})$, 
where the constant $c>0.$
\end{proof}
\section{$L^\infty-$bound for $u$}
In order to prove the global existence of solutions it is necessary to derive a $L^\infty$ estimate. 
Thus, we derive the $L^\infty-$bounds of $u$ and $v$ by using the $L^q-$bound from Lemma \ref{l11}.  
\begin{lem}\label{l2}
Let $\Omega$ be a bounded and smooth domain in $\mathbb{R}^n, n\geq 2.$ Let $\gamma>1$, $p>\gamma+1$, $q>\frac{n\gamma}{2}$, 
$0<\alpha<2\frac{p-1-\gamma}{q+\gamma}$ and $u$ belongs to $L^q(\Omega)$ and let $(u,v)$ be a local-in-time classical solution to \eqref{1} in $\Omega\times(0,T_{max})$ for some $T_{max}>0.$ Then, there exist $c>0$ such that 
\begin{align}
\|v\|_{W^{1,s}(\Omega)}\leq c\|u\|_{L^{q^*}(\Omega)}, \ n<s<q^*:=\frac{nq}{n\gamma-q}\ \mbox{and}\ v\in L^\infty(\Omega). \label{l1}
\end{align}

\begin{proof}
From Lemma \ref{l11} we have that  $u(\cdot,t)$ is bounded in $L^\infty((0,T_{max});L^q(\Omega)).$ Then, by the standard elliptic regularity results, cf. \cite[Theorem 19.1]{2}, applied to the second equation of \eqref{1} which warrant that  
\begin{align}
\|v\|_{W^{2,q/\gamma}(\Omega))}\leq c \|u\|_{L^{q/\gamma}(\Omega)}
\end{align}
hence $\|\nabla v\|_{W^{1,q/\gamma}(\Omega)}\leq c.$ 
The  Sobolev embedding theorem \cite[Corollary 7.11]{MR1814364}, \cite[Corollary 1.3.1]{MR2309679} or \cite{MR2424078} gives $v\in C^{m}(\overline{\Omega})$, $0\le m<2-\frac{n\gamma}{q}$ and $ \nabla v\in L^s(\Omega)$ for all $n<s<q^*:=\frac{nq}{n\gamma-q}$. In particular, we have 
$$\|v\|_{L^s(\Omega)}+\|\nabla v\|_{L^s(\Omega)}\leq c$$
for some positive constant $c>0$. Since $W^{1,s}(\Omega)\hookrightarrow L^\infty(\Omega)$, we also derive $\|v\|_{L^\infty(\Omega)}\leq c$.  \end{proof}
\end{lem}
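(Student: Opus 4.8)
\textbf{Proof proposal for Lemma \ref{l2}.}

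The plan is to read off the regularity of $v$ directly from the second (elliptic) equation of \eqref{1}, namely $0=\Delta v - v + u^\gamma$, using the $L^q$-bound on $u$ supplied by Lemma \ref{l11}. First I would observe that since $u(\cdot,t)$ is bounded in $L^q(\Omega)$ uniformly in $t$, the function $u^\gamma(\cdot,t)$ is bounded in $L^{q/\gamma}(\Omega)$ uniformly in $t$; here the hypothesis $q>\tfrac{n\gamma}{2}$ guarantees $q/\gamma>1$ (in fact $q/\gamma>n/2$), so that standard $W^{2,q/\gamma}$-elliptic regularity for the Neumann problem applies. Applying that theory to $-\Delta v + v = u^\gamma$ yields $\|v(\cdot,t)\|_{W^{2,q/\gamma}(\Omega)}\le c\,\|u^\gamma(\cdot,t)\|_{L^{q/\gamma}(\Omega)} = c\,\|u(\cdot,t)\|_{L^q(\Omega)}^\gamma \le c$, with $c$ independent of $t$.

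Next I would feed this second-order bound into Sobolev embedding. Since $v(\cdot,t)$ is bounded in $W^{2,q/\gamma}(\Omega)$, the gradient $\nabla v(\cdot,t)$ is bounded in $W^{1,q/\gamma}(\Omega)$, and by the Sobolev embedding $W^{1,q/\gamma}(\Omega)\hookrightarrow L^{s}(\Omega)$ we obtain $\nabla v\in L^s(\Omega)$ for every $s$ with $\tfrac{1}{s}\ge \tfrac{1}{q/\gamma}-\tfrac{1}{n} = \tfrac{n\gamma-q}{nq}$, i.e. for all $s< q^*:=\tfrac{nq}{n\gamma-q}$; since $q>\tfrac{n\gamma}{2}$ one checks $q^*>n$, so the admissible range $n<s<q^*$ is nonempty. (If $n\gamma-q\le 0$ the embedding is even into $L^\infty$ and the argument is only easier.) This gives the estimate $\|v\|_{L^s(\Omega)}+\|\nabla v\|_{L^s(\Omega)}\le c$, hence \eqref{l1}. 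Finally, because $s>n$, the embedding $W^{1,s}(\Omega)\hookrightarrow L^\infty(\Omega)$ (indeed into $C^0(\overline\Omega)$) upgrades this to $\|v(\cdot,t)\|_{L^\infty(\Omega)}\le c$, which is the remaining assertion.

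I do not anticipate a serious obstacle here: the whole lemma is a chain of elliptic regularity plus Sobolev embedding, and the only thing to watch is bookkeeping of exponents — checking that $q/\gamma>1$ (so that $L^{q/\gamma}$-elliptic theory and the relevant embeddings are valid) and that the resulting Sobolev exponent $q^*$ indeed exceeds $n$, both of which follow from the standing hypothesis $q>\tfrac{n\gamma}{2}$. One mild subtlety worth stating explicitly is that all constants must be taken uniform in $t\in(0,T_{max})$, which is automatic because the only $t$-dependent input is $\|u(\cdot,t)\|_{L^q(\Omega)}$, already bounded uniformly by Lemma \ref{l11}. The normalization $\int_\Omega v(\cdot,t)\,\mathrm{d}x=0$ from Lemma \ref{localexistence} is not even needed, since the operator $-\Delta+1$ with Neumann conditions is invertible without a mean-zero constraint.
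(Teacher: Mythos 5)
Your proposal is correct and follows essentially the same route as the paper: an $L^{q/\gamma}$-bound on $u^\gamma$, elliptic $W^{2,q/\gamma}$-regularity for $-\Delta v+v=u^\gamma$ with Neumann conditions, Sobolev embedding to get $\nabla v\in L^s(\Omega)$ for $n<s<q^*$, and then $W^{1,s}(\Omega)\hookrightarrow L^\infty(\Omega)$ for the bound on $v$. Your version is in fact slightly more careful than the paper's on the bookkeeping (writing the right-hand side as $\|u^\gamma\|_{L^{q/\gamma}}=\|u\|_{L^q}^\gamma$, verifying $q^*>n$, and noting the easier case $n\gamma\le q$), but these are cosmetic refinements rather than a different argument.
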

\begin{lem}\label{l3}
Let the hypotheses of  Lemma 4.1 be satisfied and let $(u,v)$ be a local-in-time classical solution to system \eqref{1} in $\Omega\times(0,T_{max})$ for some $T_{max}>0.$ Then, there exist $c>0$ such that  
\begin{align}
\|u(\cdot,t)\|_{L^\infty((0,T_{max})\times \Omega)}\leq c \ \ \mbox{for all}\ \ t\in(0,T_{max}).
\end{align}
\end{lem}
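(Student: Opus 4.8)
The plan is to pass from the $L^q$-bound of Lemma \ref{l11} to an $L^\infty$-bound via a Moser--Alikakos type iteration, or equivalently via the semigroup (variation-of-constants) representation of $u$. First I would write the solution as
\begin{align*}
u(\cdot,t)=e^{t\Delta}u_0+\int_0^t e^{(t-\sigma)\Delta}\bigl(-\nabla\cdot(u\nabla v)+\kappa(|x|)u-\mu(|x|)u^p\bigr)(\cdot,\sigma)\,\mathrm{d}\sigma.
\end{align*}
Since $\mu\ge 0$ and $p>\gamma+1>1$, the superlinear term $-\mu(|x|)u^p\le 0$ may simply be discarded when estimating $\|u(\cdot,t)\|_{L^\infty}$ from above (it only helps), and the lower-order term $\kappa(|x|)u$ is controlled by $\|\kappa\|_{L^\infty(\Omega)}\|u\|_{L^p}$ for any exponent. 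So the only genuinely dangerous contribution is the chemotactic flux $-\nabla\cdot(u\nabla v)$.

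The key step is to estimate that flux term. From Lemma \ref{l2} we already know $v\in L^\infty(\Omega)$ and, more importantly, $\nabla v\in L^s(\Omega)$ with a uniform bound for every $s\in(n,q^*)$, $q^*=\tfrac{nq}{n\gamma-q}$; here we crucially use $q>\tfrac{n\gamma}{2}$, which forces $q^*>n$ so that such an $s>n$ exists. Fix such an $s$. Applying Lemma \ref{11}(iv) to $w=u\nabla v$ with Hölder's inequality $\|u\nabla v\|_{L^r}\le\|u\|_{L^{r_1}}\|\nabla v\|_{L^s}$ (where $\tfrac1r=\tfrac1{r_1}+\tfrac1s$) gives, for $p_0>r$,
\begin{align*}
\bigl\|e^{(t-\sigma)\Delta}\nabla\cdot(u\nabla v)(\cdot,\sigma)\bigr\|_{L^{p_0}(\Omega)}
\le C\Bigl(1+(t-\sigma)^{-\frac12-\frac n2\left(\frac1r-\frac1{p_0}\right)}\Bigr)e^{-\lambda_1(t-\sigma)}\,\|u(\cdot,\sigma)\|_{L^{r_1}(\Omega)}.
\end{align*}
Combined with the smoothing estimate Lemma \ref{11}(i) for the $\kappa u$ and data terms, this sets up a bootstrap: starting from $u\in L^\infty((0,T_{\max});L^q(\Omega))$, each iteration raises the Lebesgue exponent $p_0$ at the cost of integrating a singularity $(t-\sigma)^{-\beta}$ in $\sigma$, which is integrable precisely when $\beta<1$; since $s>n$, the exponent in the power of $(t-\sigma)$ stays below $1$ as long as the gain per step is not too large, and after finitely many steps one reaches $p_0=\infty$. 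Because all the constants $C_i$, $\lambda_1$, $\|v\|_{L^\infty}$, $\|\nabla v\|_{L^s}$ and the $L^q$-bound on $u$ are independent of $T_{\max}$, the resulting bound $\|u(\cdot,t)\|_{L^\infty(\Omega)}\le c$ is uniform on $(0,T_{\max})$, which is the claim. (Alternatively one can run the Alikakos $L^{p_k}$-iteration with $p_k=2^k$ directly on the energy inequality \eqref{eq2}, using the dissipation term $\frac{4(q-1)}{q^2}\int|\nabla u^{q/2}|^2$ together with a Gagliardo--Nirenberg interpolation and the $L^q$-bound to close the recursion; the two approaches are interchangeable.)

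The main obstacle is the first step of the bootstrap: one must check that the exponent $\tfrac12+\tfrac n2\bigl(\tfrac1r-\tfrac1{p_0}\bigr)$ appearing in Lemma \ref{11}(iv) can be kept strictly below $1$ while still making progress in the integrability of $u$, i.e.\ that the interplay between the $L^s$-integrability of $\nabla v$ (with $s>n$ available thanks to $q>\tfrac{n\gamma}{2}$) and the required Hölder splitting leaves enough room. Once the singularity is shown to be integrable in $\sigma$ uniformly, the remainder is a routine finite induction on the exponent plus collecting $T_{\max}$-independent constants, and an application of Lemma \ref{lemmaODE} is not even needed for this part.
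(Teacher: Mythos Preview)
Your approach is correct and shares the paper's starting point: the variation-of-constants representation, dropping the nonpositive $-\mu(|x|)u^p$ term, and handling the chemotactic flux via Lemma~\ref{11}(iv) together with the bound $\nabla v\in L^s(\Omega)$, $s>n$, furnished by Lemma~\ref{l2}. The difference lies in how the passage from $L^q$ to $L^\infty$ is closed.

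You propose a finite bootstrap, raising the integrability exponent of $u$ in steps of size at most $\tfrac1n-\tfrac1s>0$ so that the time singularity stays integrable. The paper instead jumps to $L^\infty$ in a single step: choosing $n<r<s$ it estimates $\|u\nabla v\|_{L^r}\le\|u\|_{L^{rs/(s-r)}}\|\nabla v\|_{L^s}$ and then interpolates $\|u\|_{L^{rs/(s-r)}}\le\|u\|_{L^\infty}^\theta\|u\|_{L^q}^{1-\theta}$ with some $\theta\in(0,1)$. Setting $N(\bar t)=\sup_{t\in(0,\bar t)}\|u(\cdot,t)\|_{L^\infty(\Omega)}$, this yields a self-referential inequality $N\le C+CN^\theta$, which for $\theta<1$ immediately gives $N\le c$ uniformly. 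This one-shot argument is shorter and avoids tracking constants through an induction, while your iteration is more elementary and does not require the sublinear-inequality lemma. Both routes rely on exactly the same structural input (namely $s>n$, which is where the hypothesis $q>\tfrac{n\gamma}{2}$ enters), so neither is stronger than the other; the paper's version is simply more economical.
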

\begin{proof}
By the variation of constants formula, we can write 
\begin{align}
u(\cdot,t)=&e^{t\Delta}u_0-\int_0^t e^{(t-\tau)\Delta}\nabla\cdot(u(\cdot,\tau)\nabla v(\cdot,\tau))\mathrm{d}\tau+
\int_0^t k(|x|)e^{(t-s)\Delta}u(\cdot,\tau)\mathrm{d}\tau-\int_0^t \mu(|x|)e^{(t-\tau)\Delta}u^p(\cdot,\tau)\mathrm{d}\tau.\label{f1}
\end{align}
Using the positivity of $u$, we can rewrite \eqref{f1} as 
\begin{align*}
u(\cdot,t)\leq&e^{t\Delta}u_0-\int_0^t e^{(t-\tau)\Delta}\nabla\cdot(u(\cdot,\tau)\nabla v(\cdot,\tau))\mathrm{d}\tau+
\int_0^t k(|x|)e^{(t-\tau)\Delta}u(\cdot,\tau)\mathrm{d}\tau.
\end{align*}
Taking sup norm on both sides of the last inequality
\begin{align}
\begin{split}
\|u(\cdot,t)\|_{L^\infty(\Omega)}\leq&\|e^{t\Delta}u_0\|_{L^\infty(\Omega)}+\int_0^t  \|e^{(t-\tau)\Delta}\nabla\cdot(u(\cdot,\tau)\nabla v(\cdot,\tau))\|_{L^\infty(\Omega)}\mathrm{d}\tau+\int_0^t \|k(|x|)e^{(t-\tau)\Delta}u(\cdot,\tau)\|_{L^\infty(\Omega)}\mathrm{d}\tau\\
=:&\|u_1(\cdot,t)\|_{L^\infty(\Omega)}+\|u_2(\cdot,t)\|_{L^\infty(\Omega)}+\|u_3(\cdot,t)\|_{L^\infty(\Omega)}.
\end{split}
\label{f2}
\end{align}
Now, we use the known smoothing estimates for the Neumann heat semigroup that is Lemma \ref{11} to estimate R.H.S of \eqref{f2}. First we estimate $\|u_1(\cdot,t)\|_{L^\infty(\Omega)}$  as follows
\begin{align}
\|u_1(\cdot,t)\|_{L^\infty(\Omega)}\leq& C_1\bigg(1+t^{-\frac{n}{2}(\frac{1}{1}-\frac{1}{\infty})}\bigg)e^{-\lambda_1t}\|u_0\|_{L^1(\Omega)}\leq C_1 (1+t^{-\frac{n}{2}})e^{-\lambda_1t}\|u_0\|_{L^1(\Omega)}.\label{f3}
\end{align} 
Next, we estimate $\|u_2(\cdot,t)\|_{L^\infty(\Omega)}:$  for all $t>0$ and $r>n$, we have 
\begin{align}
\begin{split}
\|e^{(t-\tau)\Delta}\nabla\cdot(u(\cdot,\tau)\nabla v(\cdot,\tau))\|_{L^\infty(\Omega)}
\leq& C_2\bigg(1+(t-\tau)^{-\frac{1}{2}-\frac{n}{2}(\frac{1}{r}-\frac{1}{\infty})}\bigg)e^{-\lambda_1(t-\tau)}\|u(\cdot,\tau)\nabla v(\cdot,\tau)\|_{L^r(\Omega)}\\
\leq& C_2(1+(t-\tau)^{-\frac{1}{2}-\frac{n}{2r}})e^{-\lambda_1(t-\tau)}\|u(\cdot,\tau)\nabla v(\cdot,\tau)\|_{L^r(\Omega)},
\end{split}\label{f4}
\end{align}
where $\lambda_1$ is the first non-zero eigenvalue of $-\Delta$ in $\Omega$ under the Neumann boundary condition.  Since $s>n,$ we take $n<r<s$ to estimate \eqref{f4}. By using H\"{o}lder's  inequality and Lemma \ref{l11} and Lemma \ref{l2}, we gain that 
\begin{align*}
\|u(\cdot,\tau)\nabla v(\cdot,\tau)\|_{L^r(\Omega)}
\leq& \|u(\cdot,\tau)\|_{L^{\frac{rs}{(s-r)}}(\Omega)}\|\nabla v(\cdot,\tau)\|_{L^s(\Omega)}.
\end{align*}
Next, using the interpolation inequality \eqref{ip} and \eqref{l1} in the above inequality, we obtain that 
\begin{align}
\|u(\cdot,\tau)\|_{L^{\frac{rs}{(s-r)}}(\Omega)}
\leq&  \|u(\cdot,\tau)\|_{L^\infty(\Omega)}^{\theta} \|u(\cdot,\tau)\|_{L^q(\Omega)}^{(1-\theta)},\label{f5}
\end{align}
where $\theta=1-\frac{(s-r)q}{rs}\in (0,1)$, cf. e.g. \cite[page 12]{MR4151490}. For any given $\overline{t}\in (0,T_{max})$, let us define the function $N: (0,T_{max})\rightarrow \mathbb{R}, \overline{t}\rightarrow \sup_{t\in(0,\overline{t})}\|u(\cdot,t)\|_{L^\infty(\Omega)}$ such that 
$ N(\overline{t}):=\sup_{t\in(0,\overline{t})}\|u(\cdot,t)\|_{L^\infty(\Omega)}.$
Substituting \eqref{f5} into \eqref{f4} and using Lemma \ref{l2}, we gain that
\begin{align*}
\|e^{(t-\tau)\Delta}\nabla\cdot(u(\cdot,\tau)\nabla v(\cdot,\tau))\|_{L^\infty(\Omega)}
\leq&
C_2(1+(t-\tau)^{-\frac{1}{2}-\frac{n}{2r}})e^{-\lambda_1(t-\tau)}\|u(\cdot,\tau)\|_{L^\infty(\Omega)}^{\theta} \|u(\cdot,\tau)\|_{L^q(\Omega)}^{(1-\theta)}.
\end{align*}
Therefore, we have
\begin{align*}
\|u_2(\cdot,\tau)\|_{L^\infty(\Omega)}\leq C_2\int_0^t(1+(t-\tau)^{-\frac{1}{2}-\frac{n}{2r}})e^{-\lambda_1(t-\tau)}\|u(\cdot,\tau)\|_{L^\infty(\Omega)}^{\theta} \|u(\cdot,\tau)\|_{L^q(\Omega)}^{(1-\theta)}\mathrm{d}\tau.
\end{align*}
By using Lemma \ref{l11}, the above inequality becomes
\begin{align}
\|u_2(\cdot,\tau)\|_{L^\infty(\Omega)}\leq C_2\int_0^t(1+(t-\tau)^{-\frac{1}{2}-\frac{n}{2r}})e^{-\lambda_1(t-\tau)}\|u(\cdot,\tau)\|_{L^\infty(\Omega)}^{\theta}\mathrm{d}\tau.\label{f9}
\end{align}
Finally, similar to \cite{MR3852723} -- By the order property of the Neumann heat semigroup $(e^{t\Delta})_{t\geq 0}$ due to the the maximum principle -- we estimate 
\begin{align}
\int_0^t \|k(|x|)e^{(t-\tau)\Delta}u(\cdot,\tau)\|_{L^\infty(\Omega)}\mathrm{d}\tau&\leq C_3. \label{f10}
\end{align}
Substituting \eqref{f3}, \eqref{f9} and \eqref{f10} into \eqref{f2} and recalling the definition of $N(\overline{t})$, we obtain that 
\begin{align*}
\begin{split}
 N(\overline{t})\leq C_4+C_4 N^{\theta}(\overline{t}).
\end{split}
\end{align*}
Since $\theta \in (0,1)$ and by means of \cite[Lemma 2.4]{MR4151490}, we can deduce, 
\begin{align}
N(\overline{t})\leq c,\label{final}
\end{align}
where $c$ is a positive constant. Finally, the right hand side of \eqref{final} is independent of $\overline{t}\in (0,T_{\mbox{max}})$ and thus, the uniform boundedness of $\|u(\cdot,t)\|_{L^\infty((0,T_{max});L^\infty(\Omega))}$ is obtained. 
\end{proof}
\section{Proof of Theorem 1.1}
In the last section, we prove Theorem 1.1. The proof is based on the previous results and is done by contradiction. 
\begin{proof}
The proof is based on the contradiction. Assume that $T_{max}<\infty.$ From \ref{l3} we have that 
\begin{align*}
\|u(\cdot,t)\|_{L^\infty((0,T_{max});L^\infty(\Omega))}\leq C \ \mbox{for all}\  \ t\in(0,T_{max}),
\end{align*}
Which is a contradiction to the blow-up criterion \eqref{bc}. Hence, $T_{max}=\infty$ and $\|u(\cdot,t)\|_{L^\infty((0,\infty);L^\infty(\Omega))}\leq C$. Therefore this completes the proof of main theorem. 
\end{proof}

\begin{ack}
The authors would like to thank Michael Winkler for his valuable comments on a previous version of this manuscript that eventually led to an improved presentation. A.E., supported by the Kristine Bonnevie scholarship 2020 of the Faculty of Mathematics and Natural Sciences, University of Oslo, during his research stay at Lund University in 2020, wishes to thank Erik Wahl\'en and the Centre of Mathematical Sciences, Lund University, Sweden for hosting him. A.E. was partially supported by the DFG under Germany's Excellence Strategy – MATH$^+$: The Berlin Mathematics Research Center (EXC-2046/1 – project ID: 390685689) via the project AA1-12$^*$.
\end{ack}


\bibliography{mybibfile}

\begin{thebibliography}{10}
\expandafter\ifx\csname url\endcsname\relax
  \def\url#1{\texttt{#1}}\fi
\expandafter\ifx\csname urlprefix\endcsname\relax\def\urlprefix{URL }\fi
\expandafter\ifx\csname href\endcsname\relax
  \def\href#1#2{#2} \def\path#1{#1}\fi

\bibitem{logisticsources}
Y.~Zhang, S.~Zheng, Global boundedness of solutions to a quasilinear
  parabolic-parabolic {K}eller-{S}egel system with logistic source, Appl. Math.
  Lett. 52 (2016) 15--20.

\bibitem{logisticsources1}
M.~Fuest, Finite-time blow-up in a two-dimensional {K}eller-{S}egel system with
  an environmental dependent logistic source, Nonlinear Anal. Real World Appl.
  52 (2020) 103022, 14.

\bibitem{logisticsources2}
J.~Yan, M.~Fuest, When do keller--segel systems with heterogeneous logistic
  sources admit generalized solutions?, Discrete \& Continuous Dynamical
  Systems - B 26~(8) (2021) 4093--4109.

\bibitem{logisticsources3}
J.~Zheng, Y.~Li, G.~Bao, X.~Zou, A new result for global existence and
  boundedness of solutions to a parabolic-parabolic {K}eller-{S}egel system
  with logistic source, J. Math. Anal. Appl. 462~(1) (2018) 1--25.

\bibitem{secretion2}
X.~Li, On a fully parabolic chemotaxis system with nonlinear signal secretion,
  Nonlinear Anal. Real World Appl. 49 (2019) 24--44.

\bibitem{secretion3}
T.~Xiang, Dynamics in a parabolic-elliptic chemotaxis system with growth source
  and nonlinear secretion, Commun. Pure Appl. Anal. 18~(1) (2019) 255--284.

\bibitem{secretion4}
J.~Zhao, C.~Mu, L.~Wang, K.~Lin, A quasilinear parabolic-elliptic
  chemotaxis-growth system with nonlinear secretion, Appl. Anal. 99~(1) (2020)
  86--102.

\bibitem{pat1953}
C.~S. Patlak, Random walk with persistence and external bias, Bull. Math.
  Biophys. 15 (1953) 311--338.

\bibitem{ks1971}
E.~F. Keller, L.~A. Segel, Traveling bands of chemotactic bacteria: A
  theoretical analysis, Journal of Theoretical Biology 30~(2) (1971) 235--248.

\bibitem{kss1971}
E.~F. Keller, L.~A. Segel, Model for chemotaxis, Journal of Theoretical Biology
  30~(2) (1971) 225--234.

\bibitem{ph2001}
T.~H. K.~Painter, Volume-filling and quorum-sensing in models for chemosesitive
  movement, Can. Appl. Math. Q. 10 (2001) 501--543.

\bibitem{MR4188348}
G.~Arumugam, J.~Tyagi, Keller-{S}egel chemotaxis models: a review, Acta Appl.
  Math. 171 (2021) Paper No. 6, 82.

\bibitem{nt2018}
M.~Negreanu, J.~I. Tello, On a parabolic-elliptic system with gradient
  dependent chemotactic coefficient, J. Differential Equations 265~(3) (2018)
  733--751.

\bibitem{wt2007}
J.~I. Tello, M.~Winkler, A chemotaxis system with logistic source, Comm.
  Partial Differential Equations 32~(4-6) (2007) 849--877.

\bibitem{wang2020}
Y.~Wang, X.~Zhang, On a parabolic-elliptic chemotaxis-growth system with
  nonlinear diffusion, Discrete Contin. Dyn. Syst. Ser. S 13~(2) (2020)
  321--328.

\bibitem{ev}
L.~C. Evans, Partial differential equations, 2nd Edition, Vol.~19 of Graduate
  Studies in Mathematics, American Mathematical Society, Providence, RI, 2010.

\bibitem{tem}
R.~Temam, Infinite-dimensional dynamical systems in mechanics and physics, 2nd
  Edition, Vol.~68 of Applied Mathematical Sciences, Springer-Verlag, New York,
  1997.

\bibitem{mw12010}
M.~Winkler, Aggregation vs. global diffusive behavior in the higher-dimensional
  {K}eller-{S}egel model, J. Differential Equations 248~(12) (2010) 2889--2905.

\bibitem{2}
A.~Friedman, Partial Differential Equations, Dover Publications, 1967.

\bibitem{MR1814364}
D.~Gilbarg, N.~S. Trudinger, Elliptic partial differential equations of second
  order, Classics in Mathematics, Springer-Verlag, Berlin, 2001, reprint of the
  1998 edition.

\bibitem{MR2309679}
Z.~Wu, J.~Yin, C.~Wang, Elliptic \& parabolic equations, World Scientific
  Publishing Co. Pte. Ltd., Hackensack, NJ, 2006.

\bibitem{MR2424078}
R.~A. Adams, J.~J.~F. Fournier, Sobolev spaces, 2nd Edition, Vol. 140 of Pure
  and Applied Mathematics (Amsterdam), Elsevier/Academic Press, Amsterdam,
  2003.

\bibitem{MR4151490}
G.~Viglialoro, T.~E. Woolley, Solvability of a {K}eller-{S}egel system with
  signal-dependent sensitivity and essentially sublinear production, Appl.
  Anal. 99~(14) (2020) 2507--2525.

\bibitem{MR3852723}
T.~Xiang, Chemotactic aggregation versus logistic damping on boundedness in the
  3{D} minimal {K}eller-{S}egel model, SIAM J. Appl. Math. 78~(5) (2018)
  2420--2438.

\end{thebibliography}

\end{document}